\newtheorem{thm}{Theorem}[section]
  \newtheorem{cor}[thm]{Corollary}
  \newtheorem{prop}[thm]{Proposition}
   \newenvironment{proof}[1][\proofname]{\par
     \normalfont\topsep6\p@\@plus6\p@\relax
     \trivlist
     \itemindent\parindent
     \item[\hskip\labelsep
           \scshape
       #1]\ignorespaces
   }{%
     \endtrivlist\@endpefalse
   }
   \providecommand{\proofname}{Proof}
\newcommand{\fdot}{\skew{6}{\dot}{f}}
\newcommand{\gdot}{\skew{4}{\dot}{g}}
\newcounter{EQNR}
\begin{document}

\title{Heat kernels on regular graphs and generalized Ihara zeta function
formulas}

\date{G. Chinta, J. Jorgenson, and A. Karlsson%
\thanks{The first and second authors acknowledge support provided by grants
from the National Science Foundation and the Professional Staff Congress
of the City University of New York. The third author received support
from SNSF grant 200021\_132528/1. Support from Institut Mittag-Leffler
(Djursholm, Sweden) is also gratefully acknowledged by all authors. We thank Pierre de la Harpe for a number of corrections.%
}}
\maketitle
\begin{abstract}
\noindent
We establish a new formula for the heat kernel on regular trees in
terms of classical $I$-Bessel functions. Although the formula is
explicit, and a proof is given through direct computation, we also
provide a conceptual viewpoint using the horocyclic transform on
regular trees.  From periodization, we then obtain a heat kernel
expression on any regular graph. From spectral theory, one has another
expression for the heat kernel as an integral transform of the
spectral measure.  By equating these two formulas and taking a certain
integral transform, we obtain several generalized versions of the
determinant formula for the Ihara zeta function associated to finite
or infinite regular graphs.  Our approach to the Ihara zeta function
and determinant formula through heat kernel analysis follows a similar
methodology which exists for quotients of rank one symmetric spaces.
\end{abstract}

\section{Introduction}

Let $X$ be a $(q+1)$-regular graph for an integer $q>0$. There is
an associated heat kernel $K_{X}(t,x_{0},x)$ corresponding to the
Laplacian formed by considering the adjacency matrix on $X$.
We show that in a natural way the building blocks of $K_{X}$ are the functions
\[
q^{-r/2}e^{-(q+1)t}I_{r}(2\sqrt{q}t),
\]
where $r\in\mathbb{Z}_{\geq0}$ , time $t\in\mathbb{R}_{\geq0}$, and
$I_{r}$ is the classical $I$-Bessel function of order $r$.  The
expression for the heat kernel on $X$ comes from a new formula for the
heat kernel on regular trees (Proposition \ref{pro:The-heat-kernel})
which we prove in this article.  Our expression is quite different
from a previous formula due to F. Chung and S.-T. Yau
\cite{ChungYau99}, which we describe in subsection
\ref{sub:heatontree}, Equations (\ref{eq:CYa}, \ref{eq:CYb}).  If we
write the functions in the above stated building block as
\[
q^{-r/2}\cdot e^{-(\sqrt{q}-1)^{2}t}\cdot e^{-2\sqrt{q}t}I_{r}(2\sqrt{q}t),
\]
then there is moreover a near-perfect analogy with the building blocks
of typical heat kernel expressions on Riemannian symmetric spaces, which
have the form
\[
F(r)\cdot e^{-at}\cdot\frac{1}{\sqrt{4\pi t}^{d}}e^{-r^{2}/4t}
\]
for certain constants $a,d$ and function $F$ that allow further
interpretations; we refer to the survey article \cite{JoLa01} and the
references therein for further discussion.

More precisely, we prove the following result.

\begin{thm}
\label{thm:The-heat-kernel}The heat kernel on a $(q+1)$-regular
graph $X$ is given by
\[
K_{X}(t,x_{0},x)=e^{-(q+1)t}\sum_{m=0}^{\infty}b_{m}(x)q^{-m/2}I_{m}(2\sqrt{q}t),
\]
where $I_{m}$ is the I-Bessel function of order $m$,
$b_{m}(x)=c_{m}(x)-(q-1)(c_{m-2}(x)+c_{m-4}(x)+...)$ and $c_{m}(x)$ is
the number of geodesics from a fixed base point $x_{0}$ to $x$ of
length $m\geq 0$.
\end{thm}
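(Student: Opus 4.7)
The plan is to deduce the formula on $X$ from Proposition \ref{pro:The-heat-kernel}, which provides the corresponding formula on the $(q+1)$-regular tree $T$, by periodizing the tree heat kernel under the universal covering $\pi\colon T\to X$. Fix a lift $\tilde x_0\in\pi^{-1}(x_0)$. The first step is to establish the identity
\[
K_X(t,x_0,x)=\sum_{\tilde x\in\pi^{-1}(x)}K_T(t,\tilde x_0,\tilde x),
\]
by noting that the right-hand side is invariant under the deck group, satisfies the heat equation on $X$, and has the correct Dirac initial data; uniqueness of the bounded heat kernel then forces equality. Because $T$ is homogeneous, $K_T(t,\tilde x_0,\tilde x)$ depends only on the tree distance $r=d_T(\tilde x_0,\tilde x)$; write $K_T(t,r)$ for this common value.

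The second step is a combinatorial re-indexing of the fiber. Given $\tilde x\in\pi^{-1}(x)$, the unique tree geodesic from $\tilde x_0$ to $\tilde x$ projects via $\pi$ to a non-backtracking walk in $X$ from $x_0$ to $x$. Conversely, any such walk lifts uniquely to a path in $T$ starting at $\tilde x_0$; being non-backtracking and living in a tree, this lift is the geodesic to some element of $\pi^{-1}(x)$. The bijection is length-preserving, so
\[
K_X(t,x_0,x)=\sum_{m\geq 0}c_m(x)\,K_T(t,m).
\]

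Finally, substitute the explicit expression furnished by Proposition \ref{pro:The-heat-kernel}, namely
\[
K_T(t,m)=e^{-(q+1)t}q^{-m/2}\Bigl(I_m(2\sqrt q\,t)-(q-1)\sum_{k\geq 1}q^{-k}I_{m+2k}(2\sqrt q\,t)\Bigr),
\]
and interchange the order of summation. Re-indexing the resulting double sum by $n=m+2k$, the coefficient of $e^{-(q+1)t}q^{-n/2}I_n(2\sqrt q\,t)$ becomes precisely $c_n(x)-(q-1)\bigl(c_{n-2}(x)+c_{n-4}(x)+\cdots\bigr)=b_n(x)$, yielding the stated formula. I expect the main obstacle to be the geometric bijection in the middle step — specifically, verifying that lifting a non-backtracking walk in $X$ to $T$ always produces a geodesic and that this correspondence exhausts $\pi^{-1}(x)$ — along with the routine justification of absolute convergence required to rearrange the double sum via the exponential decay of the $q^{-n/2}I_n$ terms.
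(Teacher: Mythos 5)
Your proposal is correct and follows essentially the same route as the paper: periodize the tree heat kernel of Proposition \ref{pro:The-heat-kernel} over the covering group, identify the number of fiber points at tree-distance $m$ with $c_m(x)$ via the lift/projection bijection between tree geodesics and non-backtracking paths in $X$, and rearrange the resulting double sum to read off $b_m(x)$. The convergence needed for the rearrangement is exactly what the paper addresses with the uniform $I$-Bessel bounds of subsection \ref{sub:Universal-bounds-for}.
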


To be specific, we define a \emph{geodesic} in a graph to be a path
without back-tracking.  The terminology is consistent with concepts
from Riemannian geometry where a geodesic is a path which locally is
distance minimizing. Moreover, a \emph{closed geodesic} is a closed
path without back-tracking or tails. We defer until section
\ref{sec:Preliminaries} for more details on these definitions and for
a precise axiomatic characterization of the heat kernel. In view of a
combinatorial observation, Proposition \ref{pro:pathcombinatorics}, we
may formulate the following result.

\begin{cor}\label{corollary}
  In addition to the stated assumptions, suppose that $X$ is vertex
  transitive. Let $N_{m}^{0}$ denote the number of closed geodesics of
  length $m$ in $X$ with base point $x_{0}$. Then
\[
K_{X}(t,x_{0},x_{0})=K_{q+1}(t,x_{0},x_{0})+e^{-(q+1)t}\sum_{m=1}^{\infty}
N_{m}^{0}q^{-m/2}I_{m}(2\sqrt{q}t)
\]
where $K_{q+1}$ denotes the heat kernel of the $(q+1)$-regular tree.
\end{cor}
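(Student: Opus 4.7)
The plan is to evaluate Theorem \ref{thm:The-heat-kernel} on the diagonal $x = x_0$ both for the graph $X$ and for the $(q+1)$-regular tree, take the difference, and then invoke the combinatorial Proposition \ref{pro:pathcombinatorics} to identify the remaining coefficients.

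First, Theorem \ref{thm:The-heat-kernel} gives
\[
K_X(t, x_0, x_0) = e^{-(q+1)t} \sum_{m=0}^\infty b_m(x_0)\, q^{-m/2} I_m(2\sqrt{q}\,t),
\]
with $b_m(x_0) = c_m(x_0) - (q-1)\bigl(c_{m-2}(x_0) + c_{m-4}(x_0) + \cdots\bigr)$. The same theorem applied to the $(q+1)$-regular tree yields an analogous expansion for $K_{q+1}(t, x_0, x_0)$ with coefficients $b_m^{\mathrm{tree}}$. Since a tree admits no non-trivial non-backtracking closed walk, one has $c_0^{\mathrm{tree}}(x_0) = 1$ while $c_m^{\mathrm{tree}}(x_0) = 0$ for every $m \geq 1$.

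Second, I form the difference of the two series. The $m = 0$ coefficients cancel. For $m \geq 1$, using that $c_j(x_0) - c_j^{\mathrm{tree}}(x_0) = c_j(x_0)$ when $j \geq 1$ and $= 0$ when $j = 0$, I obtain
\[
b_m(x_0) - b_m^{\mathrm{tree}} \;=\; c_m(x_0) - (q-1)\!\sum_{\substack{k \geq 1 \\ m - 2k \geq 1}} c_{m-2k}(x_0).
\]
Thus the statement of the corollary is reduced to the combinatorial identity
\[
c_m(x_0) - (q-1)\!\sum_{\substack{k \geq 1 \\ m - 2k \geq 1}} c_{m-2k}(x_0) \;=\; N_m^0, \qquad m \geq 1,
\]
for the vertex-transitive regular graph $X$.

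The main obstacle, and the only substantive step that remains, is this combinatorial identity---which is exactly Proposition \ref{pro:pathcombinatorics}. I expect the argument there to decompose each non-backtracking closed walk at $x_0$ according to the length $k$ of its maximal initial segment that is traversed in reverse at the end (its \emph{tail}): every such walk factors uniquely into an outgoing non-backtracking path $x_0, x_1, \ldots, x_k$, a closed geodesic of length $m - 2k$ at $x_k$ whose first and last edges avoid the edge $\{x_{k-1}, x_k\}$, and the outgoing path traversed in reverse. Vertex transitivity permits one to translate counts of such closed geodesics at $x_k$ back to counts at $x_0$, while the branching factor $q$ at each non-initial step of the tail accounts for the appearance of the prefactor $q-1$ after regrouping by parity. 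Once Proposition \ref{pro:pathcombinatorics} supplies this identity, substituting it back into the subtracted series yields the formula claimed in the corollary.
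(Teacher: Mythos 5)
Your proof is correct and follows essentially the same route as the paper: specialize Theorem \ref{thm:The-heat-kernel} to the diagonal, subtract the tree heat kernel (whose coefficients $b_0^{\mathrm{tree}}=1$, $b_{2j}^{\mathrm{tree}}=-(q-1)$, $b_{\mathrm{odd}}^{\mathrm{tree}}=0$ indeed match Proposition \ref{pro:The-heat-kernel} at $r=0$), and identify the surviving coefficients with $N_m^0$ via Proposition \ref{pro:pathcombinatorics}. The only small point worth making explicit is that Proposition \ref{pro:pathcombinatorics} is stated for $k\geq 3$, so the cases $m=1,2$ of your reduced identity rely on the conventions $c_1^0=N_1^0$ and $c_2^0=N_2^0$ recorded in the paper.
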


For finite, not necessarily vertex transitive, graphs $X$, similar
formulas were previously proved by Ahumada \cite{Ahumada87},
Terras-Wallace \cite{Terras03} and Mn\"ev \cite{Mnev07}. Note that our
formula holds for infinite graphs as well, and is therefore more
general than the finite graph case.
%,  where the formulas instead of
%$N_{0}(m)$ have $N(m)$, the number of all the closed geodesics, which
%typically is infinite in the infinite graphs case.
With our methods,
one can also deduce the formula for the finite non-vertex transitive
case using Proposition \ref{pro:combforfinitegraphs}.

There is a second expression for the heat kernel coming from spectral
considerations. Equating the two expressions for the heat kernel, as
in known approaches to the Poisson summation formula or the Selberg
trace formula, one obtains an identity which is a type of theta
inversion formula.  From the identity, we will apply a certain
integral transform, which amounts to a Laplace transform with a change
of variables from which we obtain the logarithmic derivative of the
Ihara zeta function.  This procedure is motivated by McKean
\cite{McKean} in his approach to the Selberg zeta function and was
axiomatized in \cite{JoLa01} to abstract settings.  In the end, one
obtains determinantal formulas for Ihara zeta-like functions.  In a
special case, we recover the standard formula stemming from Ihara's
work \cite{Ihara66}, which in turn is generalized in
\cite{Hashimoto89,Bass92,FoZe99,KoSu00,ST} for finite graphs; see
subsection \ref{sub:Ihara's-determinantal-formula}.

We now describe one sample outcome which comes from the above
described sequence of calculations. Let $X$ be a vertex transitive
$(q+1)$-regular graph. We define the associated \emph{Ihara zeta
  function} of $X$ by
\[
\zeta_{X}(u)=
\exp\left\{ \sum_{m=1}^{\infty}\frac{N_{m}^{0}}{m}u^{m}\right\} ,
\]
where $N_{m}^{0}$ is the number of closed geodesics of length $m$
starting at a fixed vertex $x_{0}$. For finite graphs, the classical Ihara zeta function is just our Ihara zeta function raised to the power equaling the number of vertices.

\begin{thm}
\label{thm:iharainfinite}Let $X$ be a vertex transitive $(q+1)$-regular
graph with spectral measure $\mu$ for the Laplacian. Then
\[
\zeta_{X}(u)^{-1}=(1-u^{2})^{(q-1)/2}\exp\left(\int\log\left(1-(q+1-\lambda)u+qu^{2}\right)d\mu(\lambda)\right).
\]
\end{thm}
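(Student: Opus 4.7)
The plan is to equate the two available expressions for $K_{X}(t,x_{0},x_{0})$ --- the combinatorial/Bessel formula from Corollary \ref{corollary} and the spectral representation $\int e^{-\lambda t}\,d\mu(\lambda)$ --- and then apply an integral transform in $t$ that extracts $\log\zeta_{X}(u)$ on the combinatorial side. Concretely, I will use the Laplace transform $f\mapsto\int_{0}^{\infty}e^{-s(u)t}f(t)\,dt$ with the change of variables $s(u)=\frac{(1-u)(1-qu)}{u}$, equivalently $s(u)+q+1=\tfrac{1}{u}+qu$, so that the characteristic polynomial appearing in the theorem emerges naturally from the substitution.

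The computational engine is the closed-form evaluation
\[
\int_{0}^{\infty}e^{-(s(u)+q+1)t}q^{-m/2}I_{m}(2\sqrt{q}t)\,dt=\frac{u^{m+1}}{1-qu^{2}},
\]
which follows by specializing the classical Laplace transform $\int_{0}^{\infty}e^{-at}I_{m}(bt)\,dt=(a-\sqrt{a^{2}-b^{2}})^{m}/(b^{m}\sqrt{a^{2}-b^{2}})$ to $a=s(u)+q+1$, $b=2\sqrt{q}$ and applying the change of variables. Summing over $m\ge 1$ and discarding the common factor $u/(1-qu^{2})$, the transform of the combinatorial side becomes $\sum_{m\ge 1}N_{m}^{0}u^{m}=u\frac{\zeta_{X}'(u)}{\zeta_{X}(u)}$.

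On the spectral side, Fubini produces $\int\frac{u\,d\mu(\lambda)}{1-(q+1-\lambda)u+qu^{2}}$ as the transform of $\int e^{-\lambda t}\,d\mu(\lambda)$. The transform of $K_{q+1}(t,x_{0},x_{0})$ I will compute from Proposition \ref{pro:The-heat-kernel} specialized to the tree, where the coefficients collapse to $b_{0}=1$, $b_{2n}=-(q-1)$ for $n\ge 1$, and $b_{m}=0$ for $m$ odd; the resulting geometric series in $u$ telescopes to $u/(1-u^{2})$. Combining these calculations yields the logarithmic-derivative identity
\[
u\frac{\zeta_{X}'(u)}{\zeta_{X}(u)}=(1-qu^{2})\int\frac{d\mu(\lambda)}{1-(q+1-\lambda)u+qu^{2}}-\frac{1-qu^{2}}{1-u^{2}},
\]
and a short partial-fraction rearrangement --- using that $\mu$ is a probability measure, since it is the spectral measure of the unit vector $\delta_{x_{0}}$ --- shows this equals $u$ times the derivative of the function $-\tfrac{q-1}{2}\log(1-u^{2})-\int\log(1-(q+1-\lambda)u+qu^{2})\,d\mu(\lambda)$. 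Dividing by $u$, integrating from $0$ (where $\log\zeta_{X}$ vanishes), and exponentiating gives the claimed identity.

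The main obstacle will be justifying the various interchanges --- termwise application of the Laplace transform to the Bessel series from Corollary \ref{corollary}, Fubini for the spectral integral, and exchange of $\int d\mu(\lambda)$ with the $u$-integration --- on a nonempty range of $u$. The range $0<u<1/q$ should suffice: there $s(u)>0$, and the series $\sum N_{m}^{0}u^{m}$ converges absolutely in view of the crude bound $N_{m}^{0}\le(q+1)q^{m-1}$. Once the identity is established on this range it extends to the natural domain of analyticity by analytic continuation.
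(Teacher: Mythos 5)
Your proposal is correct and follows essentially the same route as the paper: the paper packages your Laplace transform with substitution $s(u)+q+1=qu+1/u$ into its $G$-transform (which carries the extra rational prefactor $u^{-2}-q$ so that the building block $e^{-(q+1)t}q^{-k/2}I_k(2\sqrt{q}t)$ maps exactly to $u^{k-1}$), equates the periodized and spectral expressions for $K_X(t,x_0,x_0)$, recognizes both sides as logarithmic derivatives, and integrates from $u=0$. Your carrying of the factor $u/(1-qu^2)$ and the closing partial-fraction step, together with $\int d\mu=1$, reproduce exactly the same identity, so there is no substantive difference.
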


Again, we defer to section \ref{sec:Preliminaries} for the definitions
of the Laplacian and spectral measure. There are some papers in the
literature defining Ihara zeta functions for infinite graphs, in particular
Clair and Mokhtari-Sharghi \cite{Bryan02}, Grigorchuk-Zuk \cite{Grigorchuk04}
and Guido, Isola and Lapidus \cite{Guido08}. Their definitions are
at least a priori somewhat different in that they typically look at
approximations by finite graphs and also using von Neumann trace for
group operator algebras. In many cases it coincides with our definition
and thereby the formula in Theorem \ref{thm:iharainfinite} can be
recovered in those references.

A number of interesting examples of
Ihara zeta functions for infinite Cayley graphs can be found in \cite{Grigorchuk04}.
Additionally, we refer to the articles \cite{Sun1}, \cite{Sun2}
and \cite{Sun3} in which the author provides a fascinating discussion
in which the spectral and zeta function analysis on graphs is compared
to similar studies in spectral theory on symmetric spaces and zeta functions
from number theory.

In summary, we have given a new expression for the heat kernel associated to any
regular graph.  One can quickly deduce the Ihara determinant formula
and a number of interesting extensions, not the least of which is
to infinite transitive graphs. As is well-known, one main application
of such formulas is to the study of counting closed geodesics. For
us, it is also significant that our analysis provides yet another
instance of when the heat kernel yields zeta functions together with
their main functional relation, just as in the case of Riemann, Selberg,
and beyond. In particular, the present paper can be viewed in the
context of the last section of \cite{JoLa01}.

\section{Preliminaries\label{sec:Preliminaries}}

\subsection{Graphs}

We follow the definitions in Serre's book \cite{Serre80}. A \emph{graph}
$X$ consists of a set $VX$ which are called \emph{vertices}, a set $EX$
which are called \emph{edges}, and two maps
\[
EX\rightarrow VX\times VX,\text{ }y\mapsto(o(y),t(y))
\]
and
\[
EX\rightarrow EX,\text{ }y\mapsto\overline{y}
\]
such that for each $y\in EX$ we have that
$\overline{\overline{y}}=y,$ $\overline{y}\neq y$ and $o(y)=t(\overline{y})$.

The vertices $o(y)$ and $t(y)$ are the \emph{extremities} of the
edge $y$. Two vertices are \emph{adjacent} if they are extremities
of an edge. The \emph{degree} of a vertex $x$ is
\[
\deg x=Card\{y\in EX:o(y)=x\}.
\]
A graph is $d$-\emph{regular} if each vertex has degree $d$.

There is an obvious notion of morphism. Let $PATH_{n}$ denote the
graph with vertices ${0,1,2,...,n}$ and (half of) the edges
are given by $[i,i+1]$, $i=0,...,n-1$. A \emph{path} (of length
$n)$ is a morphism $c$ from $PATH_{n}$ into the graph. The sequences
of edges $y_{i}=c([i,i+1])$ (such that $t(y_{i})=o(y_{i+1})$) determines
the path. In particular a path is oriented. There is a \emph{backtracking}
if for some $i$ that $y_{i+1}=\overline{y_{i}}$ and there is a \emph{tail}
if $y_{0}=\overline{y}_{n-1}.$ A path is \emph{closed }if $c(0)=c(n)$$.$
A \emph{geodesic} is a path without backtracking. A \emph{geodesic
loop} (or \emph{circuit }in Serre's terminology) is a closed path
that is a geodesic. A \emph{closed geodesic} is a closed path with
no tail and without backtracking. (This is analogy with Riemannian
geometry where a closed geodesic, as opposed to a geodesic loop, is
required to be smooth also at the start/end point). 
The path of length zero counts as a closed geodesic and, therefore, is a geodesic loop.
Additionally, every closed path with one edge counts as a closed geodesic.  Any length
two geodesic loop is also a closed geodesic, but the closed path $y .\overline{y}$ is neither.  

A \emph{prime geodesic} is an equivalence class of closed geodesics $\left[c\right]$,
where the equivalence class is forgetting the starting point and which
is primitive in the sense that it is not a power of another closed
geodesic. The latter means by definition that there is no closed geodesic
$d$ and integer $n>1$ such that $\left[c\right]=\left[d^{n}\right]$,
which says in words that $c$ is not just a geodesic that traverses
another one $n$ number of times. $ $

An \emph{orientation} is a subset $EX_{+}$ of edges such that $EX$
is the disjoint union of $EX_{+}$ and $\overline{EX_{+}}.$ With
the data of a graph one can associate a geometric realization: start
with the discrete topology,\ take $VX\times\lbrack0,1]$ and make
identification based on the maps $o$ and $t.$

A \emph{tree} is a connected nonempty graph without geodesic loops.

We will in particular consider vertex transitive graphs, that means
that there is a group of automorphisms which is transitive on the
vertices. In particular such a graph is of course regular. A rich
source of such graphs is provided by \emph{Cayley graphs} of groups:
Let $G$ be a group and let $S$ be a subset of $G.$ We denote by
$X(G,S)$ the oriented graph having $G$ as vertices and $EX_{+}=G\times S$
with $o(g,s)=g$ and $t(g,s)=gs$ for each edge $(g,s).$

Let $X$ be a graph on which a group $G$ acts. An \emph{inversion}
is a pair consisting of an element $g$ and an edge $y$ such that
$gy=\overline{y}.$ If $G$ acts without inversions (which is the
same as saying that there is an orientation of $X$ preserved by $G$)
we can define the quotient graph $G\backslash X$ in an obvious way;\ the
respective edge and vertex sets are the corresponding quotients. (To
get rid of inversions one may pass to a barycentric division.) As in topology we
say that $X$ is a \emph{regular covering} of $Y$ if there is a group which
acts on $X$ freely and without inversion with quotient $Y$.

Let $X$ be a $(q+1)$-regular graph. Then its universal covering
is the $(q+1)$-regular tree, and the covering group acts freely on
the tree without inversion. The covering group is a free group.

\subsection{Path counting in graphs}

We fix a base vertex $x_{0}$ in a graph $X$ and define the following counting
functions which will be used in this paper:
\begin{itemize}
\item $a_{k}(x)$ is the number of paths of length $k$ from $x_{0}$ to
$x$,
\item $c_{k}(x)$ is the number of geodesics of length $k$ from $x_{0}$
to $x$,
\item $c_{k}^{0}=c_{k}(x_{0})$ is the number of geodesic loops of length $k$
starting at $x_{0}$,
\item $c_k$ the number of geodesic loops of length $k$, from some starting
point with a distinct direction,
\item $N_{k}^0$ is the number of closed geodesics of length $k$ starting
at $x_{0}$,
\item $N_k$ is the number of closed geodesics of length $k$, from some starting
point with a distinct direction,
\item $\pi_k$ is the number of prime geodesics of length $k$.
\end{itemize}

The sequences $\{c_k\},$ $\{N_k\}$ and $\{\pi_k\}$ only make sense for
finite graphs, since if the graph is infinite, these values are
typically infinite.  For finite vertex transitive graphs, the
sequences $\{N_{k}^0\}$ and $\{N_k\}$ are related by the number of
vertices, i.e. starting points. Specifically, if the graph $X$ has $n$
vertices, then $N_{k}^0\cdot n = N_k$ for all $k$.  Also, $N_k$ and
$\pi_k$ have a precise relationship, see e.g. \cite{Terras11} or
\cite{Grigorchuk04}.  Finally, we recall the conventions that
$c_{0}^0=N_{0}^0=1$, $c_{1}^0=N_{1}^{0}$ and $c_{2}^0=N_{2}^{0}$.

\begin{prop}
\label{pro:pathcombinatorics}Let $X$ be a transitive $(q+1)$-regular
graph. Then for $k \geq 3$, following relation holds true
\[
N_{k}^0=c_{k}^0-(q-1)(c_{k-2}^0+c_{k-4}^0+...)
\]
the last term being $c_{1}^0$ or $c_{2}^0$ depending on the
parity of $k.$ \end{prop}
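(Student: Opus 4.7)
\emph{Plan.} I would prove the proposition by establishing the two-term recurrence
\[
N_k^0 \;=\; c_k^0 - q\,c_{k-2}^0 + N_{k-2}^0 \qquad (k\ge 3),
\]
and then iterating it using the conventions $N_1^0=c_1^0$ and $N_2^0=c_2^0$. An elementary induction on $k$, separated into the two parities, telescopes the recurrence exactly into the alternating sum of the proposition, ending at $c_1^0$ or $c_2^0$ according as $k$ is odd or even.

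Since $c_k^0-N_k^0$ counts the geodesic loops at $x_0$ of length $k$ that carry a tail (those with $y_0=\overline{y_{k-1}}$), each such loop factors uniquely as
\[
\Gamma \;=\; y_0\cdot \gamma'\cdot \overline{y_0},
\]
where $y_0$ is an edge at $x_0$, $x_1:=t(y_0)$, and $\gamma'$ is a geodesic loop at $x_1$ of length $k-2$ whose first edge is not $\overline{y_0}$ and whose last edge is not $y_0$; these two inequalities encode exactly the no-backtracking conditions at the two seams joining $y_0$, $\gamma'$, and $\overline{y_0}$. The recurrence is therefore equivalent to
\[
\sum_{y_0\text{ at }x_0}\#\{\gamma'\text{ at }x_1(y_0):\ |\gamma'|=k-2,\ \text{first}\ne\overline{y_0},\ \text{last}\ne y_0\}\;=\;q\,c_{k-2}^0-N_{k-2}^0.
\]

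The main engine is a uniform counting lemma at each vertex: for every vertex $v$ and every $l\ge 1$,
\[
\sum_{o(e)=v}\#\{\gamma'\text{ at }v:\ |\gamma'|=l,\ \text{first}\ne e,\ \text{last}\ne\overline{e}\}\;=\;q\,c_l^0-N_l^0.
\]
I would prove this by swapping the order of summation: for a fixed geodesic loop $\gamma'$ at $v$, the edges $e$ excluded by the two constraints are the first edge of $\gamma'$ and the reverse of the last edge of $\gamma'$, and these two edges coincide precisely when $\gamma'$ has a tail. Hence $\gamma'$ is counted by $q$ admissible edges if it has a tail and by $q-1$ admissible edges if it is a closed geodesic, giving the total $q(c_l^v-N_l^v)+(q-1)N_l^v=q c_l^v-N_l^v$; vertex-transitivity then replaces $c_l^v, N_l^v$ by $c_l^0, N_l^0$.

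The final step is to identify the specific sum arising from the decomposition with the sum in the lemma. For a finite transitive graph this is immediate: summing the loops-with-tails identity over all base vertices and reindexing the outer sum over oriented edges of $X$ produces $n(c_k^0-N_k^0)$ on one side and $n(q c_{k-2}^0-N_{k-2}^0)$ on the other, where $n=|VX|$, and cancellation yields the recurrence. The main obstacle I foresee is making this identification work in the infinite case: it requires the symmetry fact that, on any vertex-transitive graph, each automorphism orbit of oriented edges meets every vertex with the same local multiplicity as its reversed orbit, so that the two sums coincide termwise once they are grouped by orbit. Once the recurrence is established, the telescoping induction is routine and the parity bookkeeping for the final term $c_1^0$ or $c_2^0$ follows directly from the base cases $k=3,4$.
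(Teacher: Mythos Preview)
Your approach is essentially the same as the paper's: both establish the recurrence $N_k^0 - N_{k-2}^0 = c_k^0 - q\,c_{k-2}^0$ by writing a tailed geodesic loop as $y_1\cdot z\cdot\overline{y_1}$ and observing that, for a fixed inner loop $z$, there are $q-1$ admissible extending edges if $z$ is a closed geodesic and $q$ if $z$ has a tail, then iterate using $N_1^0=c_1^0$, $N_2^0=c_2^0$. The paper disposes of the basepoint mismatch you worry about in a single clause (``since $X$ is vertex transitive, we may freely change the starting point of any loop, namely $z$''), so your edge-orbit discussion for the infinite case is more scrupulous than the paper's treatment but not a genuinely different route.
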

\begin{proof}
  This is similar to an argument in \cite{Serre97}. A geodesic loop of
  length $k \geq 3$ which is not a closed geodesic has the form
  $y_{1}.z.\overline{y}_{1}$ where $z$ is a geodesic loop of length
  $k-2$. There are two possibilities, either $z$ is a closed geodesic
  or not. If we fix $z$, then the number of possibilities for $y_{1}$
  is $q-1$ in the first case and $q$ in the second case.  Since $X$ is
  vertex transitive, we may freely change the starting point of any
  loop, namely $z$.  With this in mind, we obtain the recursive
  relation that
\[
c_{k}^0-N_{k}^0=(q-1)N_{k-2}^0+q(c_{k-2}^0-N_{k-2}^0)=
(c_{k-2}^0-N_{k-2}^0)+(q-1)c_{k-2}^0,
\]
which we can write as
$$
N_{k}^0 - N_{k-2}^0 = c_{k}^0 - qc_{k-2}^0.
$$
Using that $c_{1}^0=N_{1}^{0}$ and $c_{2}^0=N_{2}^{0}$, the proposition follows by
induction on $k$.
\end{proof}

With a proof similar to the one given in the above proposition, we obtain
the following result.

\begin{prop}
\label{pro:combforfinitegraphs}Let $X$ be a finite $(q+1)$-regular
graph. Then for $k \geq 3$, the following relation holds true
\[
N_k=c_k-(q-1)(c_{k-2}+c_{k-4}+...)\]
the last term being $c_{1}$ or $c_{2}$ depending on the
parity of $k$.
\end{prop}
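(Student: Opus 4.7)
The plan is to mimic the proof of Proposition \ref{pro:pathcombinatorics}, replacing the ``free change of basepoint'' provided by vertex transitivity with an honest summation over all starting vertices. Any geodesic loop of length $k \geq 3$ that fails to be a closed geodesic has a tail and therefore admits a unique decomposition $y_{1}.z.\bar{y}_{1}$, where $y_{1}$ is the initial edge and $z$ is a geodesic loop of length $k-2$ based at the vertex $v_{1}=t(y_{1})$.

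Fixing such a $z$ and counting the edges $y_{1}$ terminating at $v_{1}$, the no-backtracking condition at the two junctions forces $\bar{y}_{1}$ to avoid both the initial edge of $z$ and the reverse of the terminal edge of $z$. If $z$ is itself a closed geodesic, those two forbidden edges are distinct (since $z$ has no tail), leaving $q-1$ admissible choices of $y_{1}$; otherwise $z$ has a tail, the two forbidden edges coincide, and there are $q$ choices. Because the totals $c_{k-2}$ and $N_{k-2}$ are, by definition, summed over all vertices as basepoints, aggregating this count over every pair $(v_{1},z)$ is, by reversing $y_{1}$, the same as summing over every pair (starting vertex, non-closed geodesic loop of length $k$). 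This yields
\[
c_{k}-N_{k}=(q-1)N_{k-2}+q(c_{k-2}-N_{k-2}),
\]
which rearranges to the recurrence $N_{k}-N_{k-2}=c_{k}-qc_{k-2}$, identical in form to the one obtained in Proposition \ref{pro:pathcombinatorics}. The claim then follows by induction on $k$, starting from the base cases $c_{1}=N_{1}$ and $c_{2}=N_{2}$ (which hold by the conventions, since every closed path of length one is a closed geodesic and every length-two geodesic loop is a closed geodesic).

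The main, and rather minor, obstacle is the bookkeeping in the aggregation step: without a transitive group action there is no canonical way to relocate a loop, but the sum defining $c_{k}$ and $N_{k}$ absorbs this automatically because summing over starting vertices $v_{0}$ together with edges $y_{1}$ out of $v_{0}$ is the same as summing over terminal vertices $v_{1}$ together with edges $\bar{y}_{1}$ out of $v_{1}$. Once this reindexing is observed, the remainder is a verbatim copy of the transitive case.
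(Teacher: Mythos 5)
Your proof is correct and is essentially the paper's intended argument: the paper only remarks that the proof is ``similar to the one given in the above proposition,'' and your write-up supplies exactly the right adaptation, replacing vertex transitivity by the summation over all basepoints already built into the definitions of $c_k$ and $N_k$, with the edge-reindexing $(v_0,y_1)\leftrightarrow(v_1,\bar y_1)$ correctly justifying the aggregation. The junction count ($q-1$ versus $q$ forbidden-edge cases according to whether $z$ has a tail) and the induction from $c_1=N_1$, $c_2=N_2$ are both handled correctly.
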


\subsection{The combinatorial Laplacian and heat kernel\label{sub:The-combinatorial-Laplacian}}

Given a $(q+1)$-regular graph $X$ and function $f$ on the vertices
$X$, the Laplacian of $f$, written as $\Delta f$, is the function of
the vertices of $X$ which is defined by the formula
\[
\Delta f(x)=(q+1)f(x)\ \ -\sum_{e\text{ s.t. }o(e)=x}f(t(e)).
\]

The Laplacian is a semi-positive, bounded self-adjoint operator on $L^{2}(VX)$. For a
finite graph with $N$ vertices we label the eigenvalues of $\Delta$ as
follows: $0=\lambda_{0}\leq\lambda_{1}\leq...\leq\lambda_{N-1}\leq2(q+1)$.

The heat kernel $K_{X}(t,x,y):\mathbb{R}_{\geq0}\times X\times X\rightarrow\mathbb{R}$
on $X$ is the solution of
\begin{subequations}
\begin{align}\label{eq:heat_equation_a}
\Delta K_{X}(t,x_{0},x)+\frac{\partial}{\partial t}K_{X}(t,x_{0},x)
& =0\\ \label{eq:heat_equation_b}
K_{X}(0,x_{0},x) & = \begin{cases}
1 & \mbox{if\ } x=x_0  \\
0 & \mbox{otherwise.}
\end{cases}
%1\text{ iff }x=x_{0}\text{ otherwise }0.
\end{align}
\end{subequations}
 Sometimes we write  $K_{X}(t,x_0,x)=K_{X}(t,x)$ when a base point $x_{0}$ is
understood.

Whenever $X$ is a countable graph with bounded vertex degree, the
heat kernel of $X$ exists and is unique among bounded functions (\cite{Dodziuk,Dodziuk06}).
Let $X$ be a regular covering of $Y$ via the map $\pi$ and group $\Gamma$, then
it is formally immediate that
\[
K_{Y}(t,y_{0},y)=\sum_{x\in\pi^{-1}(y)}K_{X}(t,x_{0},x)
\]
where $y_{0}=\pi(x_{0})$; alternatively, with a chosen $x$ such
that $y=\pi(x)$, we have that
\[
K_{Y}(t,y_{0},y)=\sum_{\gamma\in\Gamma}K_{X}(t,x_{0},\gamma x).
\]
One can show that the heat kernel decays sufficiently rapidly so
that the above equalities are not only formal, but indeed are convergent series.
We refer to \cite{CJK10, Dodziuk} as well as
the heat kernel formula for trees in Proposition
\ref{pro:The-heat-kernel} and the bounds in subsection
\ref{sub:Universal-bounds-for}.

Consider the numbers $a_{n}(x)$ defined by\[
e^{(q+1)t}K_{X}(t,x)=\sum_{n=0}^{\infty}a_{n}(x)\frac{t^{n}}{n!}\]
then it is well-known and simple to see that $a_{n}(x)$ is the number
of paths from $x_{0}$ to $x$ as defined above.

%\subsection{The $I$-Bessel function}
\subsection{The $I$-Bessel function}

Classically, the $I$-Bessel function $I_{x}(t)$ is defined as a certain
solution to the differential equation
\[
t^{2}\frac{d^{2}w}{dt^{2}}+t\frac{dw}{dt}-(t^{2}+x^{2})=0.
\]
 For integer values of $x$, it is immediately shown that $I_{x}=I_{-x}$
and, for positive integer values of $x$, we have the series representation
\begin{equation}
I_{x}(t)=\sum\limits _{n=0}^{\infty}\frac{(t/2)^{2n+x}}{n!\,\Gamma(n+1+x)}\label{Iseries}\end{equation}
 as well as the integral representation \begin{equation}
I_{x}(t)=\frac{1}{\pi}\int\limits _{0}^{\pi}e^{t\cos(\theta)}\cos(\theta x)d\theta.\label{Iintegral}\end{equation}
 The mathematical literature contains a vast number of articles and
monographs which study the many fascinating properties and manifestations
of the $I$-Bessel functions, as well as other Bessel functions. The
connection with the discrete heat equation comes from the basic
relation
\begin{equation}
  \label{eq:IBessel_relation}
I_{x+1}(t)+I_{x-1}(t)=2\frac{d}{dt}I_{x}(t),
\end{equation}
 which easily can be derived from the integral representation and
trigonometric identities.  This relation will be used in the proof of
Proposition \ref{pro:The-heat-kernel}.

\subsection{Universal bounds for the $I$-Bessel function\label{sub:Universal-bounds-for}}

We have the following uniform bounds from \cite{CJK10} which used
\cite{Paltsev99}. For any $t>0$ and integer $x\geq0$, we have that
\[
\sqrt{t}\cdot e^{-t}I_{x}(t)\leq\left(\frac{t}{t+x}\right)^{x/2}=\left(1+\frac{x}{t}\right)^{-x/2}.
\]
As stated, the above bound is enough to show that the periodization procedure in our
setting gives rise to convergent sum expressions for the heat kernel.

\subsection{An integral transform of I-Bessel\label{sub:An-integral-transform}}

For integers $n$ and $s\in\mathbf{C}$ with $\textrm{Re}(s)\neq0$
we have e.g. from \cite{Oberttinger}, that \[
\int\limits _{0}^{\infty}e^{-st}e^{-t}I_{n}(t)dt=\frac{\left(s+1-\sqrt{(s^{2}+2s)}\right)^{n}}{\sqrt{(s^{2}+2s)}}.\]

We will consider the transform, essentially the Laplace transform,\[
Gf(u)=(u^{-2}-q)\int_{0}^{\infty}e^{-(qu+1/u)t}e^{(q+1)t}f(t)dt.\]
In view of the above formula, applying the transform to the heat kernel
building block, we get
\[
G\left(e^{-(q+1)t}q^{-k/2}I_{k}(2\sqrt{q}t)\right)(u)=u^{k-1}
\]
for $k\geq0$ and $u>0.$

\section{Heat kernels on regular graphs}

\subsection{A heat kernel expression for regular trees \label{sub:heatontree}}

Let $X$ be the $(q+1)$-regular tree and $x_{0}\in X$ a base point.
From its characterizing properties (\ref{eq:heat_equation_a}) and
(\ref{eq:heat_equation_b}), it is immediate to show that the heat
kernel on a graph is invariant with respect to any graph
automorphism $g$:
$$
K(t,gx_0,gx)=K(t,x_0,x).
$$
In particular, we have that the heat kernel $K(t,x_{0},x)\ $ on the
tree $X$ is radial, that is it depends only on
$r=d(x_{0},x)$. Therefore we can write the heat kernel as
$K(t,r)$. Expressions for $K(t,r)$ were established by Bednarchak
\cite{Bednarchak97}, Chung and Yau \cite{ChungYau99}, Cowling, Meda
and Setti \cite{Cowling00}, as well as Horton, Newland and Terras
\cite{HNT06}.  In the physics literature regular trees are called
Bethe lattices.  As stated, one of the main results of this paper is a
formula for the heat kernel on $X$, which we consider to be new since
we did not find the expression in either the mathematical or physics
literature.

As an example of a known expression, Chung and Yau \cite{ChungYau99}
prove that in the radial coordinate $r$, the heat kernel of the
$(q+1)$-regular tree is given by
\begin{subequations}
  \begin{align}\label{eq:CYa}
    K(t,r)& =\frac{2e^{-(q+1)t}}{\pi q^{r/2-1}}\int_{0}^{\pi}
    \frac{\exp\left(2t\sqrt{q}\cos u\right)
\sin u(q\sin(r+1)u-\sin(r-1)u)}{(q+1)^{2}-4q\cos^{2}u}du\\ \nonumber
\mbox{for $r>0$, and }\\ \label{eq:CYb}
K(t,0)& =\frac{2q(q+1)e^{-(q+1)t}}{\pi}\int_{0}^{\pi}
\frac{\exp\left(2t\sqrt{q}\cos u\right)\sin^{2}u}{(q+1)^{2}-4q\cos^{2}u}du.
  \end{align}
\end{subequations}

The formula which we prove is given in the following proposition.

\begin{prop}
\label{pro:The-heat-kernel}The heat kernel of the $(q+1)$-regular
tree is given in the radial coordinate $r\geq0$ as\[
K(t,r)=q^{-r/2}e^{-(q+1)t}I_{r}(2\sqrt{q}t)-(q-1)\sum_{j=1}^{\infty}q^{-(r+2j)/2}e^{-(q+1)t}I_{r+2j}(2\sqrt{q}t),\]
where $I$ denotes the $I$-Bessel function.
\end{prop}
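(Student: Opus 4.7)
The plan is to verify directly that the right-hand side of the claimed formula satisfies the heat equation \eqref{eq:heat_equation_a} and the initial condition \eqref{eq:heat_equation_b}, and is bounded, after which the uniqueness result cited in subsection \ref{sub:The-combinatorial-Laplacian} forces it to equal $K(t,r)$. Set
\[
F_r(t) := q^{-r/2}\, e^{-(q+1)t}\, I_r(2\sqrt{q}\,t), \qquad H(t,r) := F_r(t) - (q-1)\sum_{j=1}^{\infty} F_{r+2j}(t).
\]
The universal Bessel bound in subsection \ref{sub:Universal-bounds-for} shows at once that the series defining $H$ converges absolutely and uniformly on compact $t$-intervals and is bounded in $r$, so term-by-term differentiation in $t$ is legitimate.

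The key computational input is the Bessel identity \eqref{eq:IBessel_relation}, which after a short manipulation gives
\[
\partial_t F_r(t) = -(q+1) F_r(t) + F_{r-1}(t) + q F_{r+1}(t), \qquad r \geq 0,
\]
where we adopt the convention $F_{-1} := q F_1$ (coming from $I_{-1} = I_1$). Since $I_n(0) = 0$ for $n \geq 1$ while $I_0(0) = 1$, one reads off $H(0, 0) = 1$ and $H(0, r) = 0$ for $r \geq 1$, which is exactly \eqref{eq:heat_equation_b}. For any vertex at distance $r \geq 1$ from $x_0$, the Laplacian on radial functions acts as $(\Delta g)(r) = (q+1) g(r) - g(r-1) - q g(r+1)$, and the displayed recurrence says precisely that $(\partial_t + \Delta) F_{r+2j} = 0$ for every $j \geq 0$; summing termwise yields $(\partial_t + \Delta) H(t,r) = 0$ on $r \geq 1$.

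The main obstacle is the vertex case $r = 0$, where instead $(\Delta g)(0) = (q+1) g(0) - (q+1) g(1)$, while the recurrence for $\partial_t F_0$ produces the ``wrong'' coefficient $2q F_1$ on account of $F_{-1} = qF_1$. The discrepancy of $(q-1) F_1$ is precisely what the correction tail is designed to absorb: substituting $H(t, 0) = F_0 - (q-1) \sum_{j \geq 1} F_{2j}$ and $H(t, 1) = F_1 - (q-1) \sum_{j \geq 1} F_{2j+1}$ into both sides of the heat equation at $r = 0$ and re-indexing the tail sums in $\partial_t H(t, 0)$ by the recurrence, one verifies that the coefficient of $F_{2j}$ (resp.\ $F_{2j+1}$) for each $j \geq 1$ works out to $(q^2 - 1)$ on both sides, and the $F_0, F_1$ terms reconcile as well thanks to the $(q-1) F_1$ contribution from the correction. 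This bookkeeping is the only non-automatic step; once it is in place, uniqueness of bounded heat kernels on graphs of bounded vertex degree closes the argument.
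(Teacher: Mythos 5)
Your proposal is correct and follows essentially the same route as the paper: a direct verification that the series satisfies the initial condition and the radial form of the heat equation, driven by the identity $I_{r-1}+I_{r+1}=2I_r'$ (your recurrence $\partial_t F_r=-(q+1)F_r+F_{r-1}+qF_{r+1}$ is just a tidier packaging of the paper's coefficient-by-coefficient bookkeeping), with the $r=0$ vertex handled as the one special case. The only cosmetic difference is that you make the appeal to uniqueness of the bounded heat kernel explicit, which the paper leaves implicit in its characterization of $K$.
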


\proof It is immediate that when $t=0,$ the above series is equal to
$\delta_{0}(r)$ as required, since $I_{x}(0)=0$ for $x \neq 0$ and
$I_{0}(0)=1.$ Since $K(0,r)=\delta_{0}(r)$, it remains to show that
the above series is equal to $K(t,0)$ for $t>0$.

Denote by $f(r)=K(t,r)$ and $\fdot(r)=\partial K(t,r)/\partial t.$ If
$r=0$, then the differential equation (\ref{eq:heat_equation_a}) for
the heat kernel takes the form
\[
(q+1)f(0)-(q+1)f(1)+\fdot(0)=0,
\]
and for $r>0$ the differential equation becomes
 \[
(q+1)f(r)-qf(r+1)-f(r-1)+\fdot(r)=0.
\]

Let $g(r)$ be the above series expansion times $e^{(q+1)t}$, or, when
written out,
\[
g(r)=q^{-r/2}I_{r}(2\sqrt{q}t)-(q-1)\sum_{j=1}^{\infty}q^{-(r+2j)/2}I_{r+2j}(2\sqrt{q}t)
\]
It is an elementary exercise to show that the series expansion
satisfies the characterizing differential equation for the heat kernel
if and only if we have the differential equations

\begin{equation}\label{requals0}
-(q+1)g(1)+\gdot(0)=0,
\end{equation}
and for every $r>0$
\begin{equation}\label{rnotequals0}
-qg(r+1)-g(r-1)+\gdot(r)=0.
\end{equation}

Let us first verify (\ref{requals0}). We begin by writing the
\begin{align*}
\text{\rm Left-hand-side of (\ref{requals0})} & =-(q+1)q^{-1/2}I_{1}(2\sqrt{q}t)+(q+1)(q-1)\sum_{j=1}^{\infty}q^{-(1+2j)/2}I_{1+2j}(2\sqrt{q}t)\\
 &\qquad\qquad
+2\sqrt{q}I_{0}^{\prime}(2\sqrt{q}t)-(q-1)2\sqrt{q}\sum_{j=1}^{\infty}q^{-2j}I_{2j}^{\prime}(2\sqrt{q}t).
\end{align*}
Using the basic relation $I_{r-1}(z)+I_{r+1}(z)=2I_{r}^{\prime}(z)$
(\ref{eq:IBessel_relation}), we obtain
the expression
\begin{align}\label{requals0formula}
\notag \text{\rm Left-hand-side of (\ref{requals0})} & =-(q+1)q^{-1/2}I_{1}(2\sqrt{q}t)+(q+1)(q-1)\sum_{j=1}^{\infty}q^{-(1+2j)/2}I_{1+2j}(2\sqrt{q}t)\\
\notag &\hskip .50in +\sqrt{q}(I_{1}^{{}}(2\sqrt{q}t)+I_{-1}^{{}}(2\sqrt{q}t))\\
 & \hskip .50in -(q-1)\sqrt{q}\sum_{j=1}^{\infty}q^{-j}(I_{2j+1}^{{}}(2\sqrt{q}t)+I_{2j-1}^{{}}(2\sqrt{q}t)).
\end{align}
Collecting terms, and recalling that $I_{-1}=I_{1}$, we can evaluate the coefficient of
each $I$-Bessel function in (\ref{requals0formula}):

 \begin{align*}
I_{0} & :\text{there are no $I_{0}$ terms,}\\
I_{1} & :-(q+1)q^{-1/2}+2\sqrt{q}-(q-1)\sqrt{q}q^{-1}=0,\\
I_{2j+1} &
:(q^{2}-1)q^{-(1+2j)/2}-(q-1)\sqrt{q}q^{-j}-(q-1)\sqrt{q}q^{-(j+1)} \\
 & \qquad=(q-1)q^{-j}((q+1)q^{-1/2}-q^{1/2}-q^{-1/2})\\
&\qquad=0.
 \end{align*}
In other words, the left-hand-side of (\ref{requals0}) is zero, as required.

Let us now check the case when $r>0$.  Again,  we begin by writing the
\begin{align}\label{rnotequals0formula}
\text{\rm Left-hand-side of (\ref{rnotequals0})} & =-q^{1-(r+1)/2}I_{r+1}(2\sqrt{q}t)+q(q-1)\sum_{j=1}^{\infty}q^{-(r+1+2j)/2}I_{r+1+2j}(2\sqrt{q}t)\\
 &\qquad -q^{-(r-1)/2}I_{r-1}(2\sqrt{q}t)+(q-1)\sum_{j=1}^{\infty}q^{-(r-1+2j)/2}I_{r-1+2j}(2\sqrt{q}t)\notag \\
 & \qquad+2\sqrt{q}q^{-r/2}I_{r}^{\prime}(2\sqrt{q}t)-2\sqrt{q}(q-1)\sum_{j=1}^{\infty}q^{-(r+2j)/2}I_{r+2j}^{\prime}(2\sqrt{q}t)\notag \\
 & =-q^{1/2-r/2}I_{r+1}(2\sqrt{q}t)+q(q-1)\sum_{j=1}^{\infty}q^{-(r+1+2j)/2}I_{r+1+2j}(2\sqrt{q}t)\notag \\
 &\qquad -q^{-(r-1)/2}I_{r-1}(2\sqrt{q}t)+(q-1)\sum_{j=1}^{\infty}q^{-(r-1+2j)/2}I_{r-1+2j}(2\sqrt{q}t\notag )\\
 &\qquad +\sqrt{q}q^{-r/2}(I_{r+1}^{{}}(2\sqrt{q}t)+I_{r-1}^{{}}(2\sqrt{q}t))\notag \\
 &\qquad -\sqrt{q}(q-1)\sum_{j=1}^{\infty}q^{-(r+2j)/2}(I_{r+1+2j}^{{}}(2\sqrt{q}t)+I_{r-1+2j}^{{}}(2\sqrt{q}t)).
 \end{align}
As above, we can evaluate the coefficient of
each $I$-Bessel function in (\ref{rnotequals0formula}):

 \begin{align*}
I_{r-1} & :-q^{-(r-1)/2}+\sqrt{q}q^{-r/2}=0\\
I_{r} & : \text{there are no $I_{r}$ terms,}\\
I_{r+1} & :-q^{1/2-r/2}+(q-1)q^{-(r+1)/2}+\sqrt{q}q^{-r/2}-\sqrt{q}(q-1)q^{-(r+2)/2}=0\\
I_{r+2j+1} & :q(q-1)q^{-(r+1+2j)/2}+(q-1)q^{-(r+1+2j)/2}\\
 & \ \ \ -\sqrt{q}(q-1)q^{-(r+2j)/2}-\sqrt{q}(q-1)q^{-(r+2j+2)/2}\\
 & =(q-1)q^{-r/2}q^{-j}(q^{1/2}+q^{-1/2}-q^{1/2}-q^{-1/2})\\
& =0.
 \end{align*}
In other words, the left-hand-side of (\ref{rnotequals0}) is zero, as required,
which completes the proof of the proposition. $\Box$

In the following subsection we indicate another approach to the proof
of Proposition \ref{pro:The-heat-kernel}.

\subsection{The horospherical transform}

Every geodesic ray $\gamma$ in the tree emanating from a fixed base
point $x_0$ can be viewed as an ``ideal boundary point at
infinity''. To each such $\gamma$ there are associated horospheres,
one for each integer $n$:
$$
\mathcal{H}_{n}=\{x\in X: \lim_{k\rightarrow\infty}\left[
  d(\gamma(k),x)-k\right]=n\}
$$
where $d$ is the natural combinatorial distance in the graph.

We fix a geodesic ray $\gamma$ and may then consider the associated \emph{horospherical transform} of functions
$f:X\rightarrow\mathbb{R}$ denoted by
\[
\mathbf{H}f:\mathbb{Z\rightarrow R}
\]
and defined by $\mathbf{H}f(n)=\sum_{x\in\mathcal{H}_{n}}f(x).$   For a radial
function, decaying fast enough, we have the inversion formula
\begin{equation}
  \label{eq:inversion_formula}
f(r)=q^{-r}(\mathbf{H}f)(r)-(q-1)\sum_{j=1}^{\infty}q^{-(r+2j)}
(\mathbf{H}f)(r+2j)
\end{equation}
 for $r\geq0$. This is stated in \cite{HNT06} on pages 7-8 for $f$
of finite support.

If we apply the horospherical transform to the equations
 (\ref{eq:heat_equation_a}, \ref{eq:heat_equation_b}) characterizing
 the heat kernel, we get
\[
(q+1)\mathbf{H}K(t,n)-\left(q\mathbf{H}K(t,n+1)
+\mathbf{H}K(t,n-1)\right)+\frac{\partial}{\partial t}\mathbf{H}K(t,n)=0,\]
 for $n\in\mathbb{Z}$ and with $\mathbf{H}K(0,n)=\delta_{0}(n).$
The solution to this difference-differential equation can be seen
to be (cf. section 3.2 in \cite{HNT06} or \cite{KaNe06})\[
f(t,n)=q^{-n/2}e^{-(q+1)t}I_{n}\left(2\sqrt{q}t\right).\]
 As already remarked, the heat kernel on a regular tree is radial,
so by inserting the above expression into the inversion formula
(\ref{eq:inversion_formula}) (here
we need to go beyond finitely supported functions for which this formula
was stated in \cite{HNT06}) we can get a different proof of Proposition
\ref{pro:The-heat-kernel}.

\subsection{Heat kernels on regular graphs}

Let $q>0$ be an integer and $X$ a $(q+1)$-regular graph. We fix
a base point $x_{0}\in VX$ which we will suppress in the notation.
The heat kernel on $X$ can be obtained from periodizing the heat
kernel $K_{q+1}$ on the universal covering space, the $(q+1)$-regular
tree $T_{q+1}$, over the covering group $\Gamma.$ Following the
remarks in subsection \ref{sub:The-combinatorial-Laplacian} we have
(with a slight abuse of notation) that\[
K_{X}(t,x)=\sum_{\gamma\in\Gamma}K_{q+1}(t,\gamma x).\]
Recall that $c_{n}(x)$ denotes the number of paths without backtracking from
the identity $x_{0}$ to $x$ of length $n$ in $X$. We also use
$c_{n}^{0}=c_{n}(x_{0})$ as notation for the number of geodesic loops,
i.e. closed paths without backtracking, starting at $x_{0}$. Note
that $c_{n}(x)$ is equal to the number of elements of the form $\gamma x$
for some $\gamma\in$$\Gamma$ on the radius $n$ sphere in $T_{q+1}$.
We therefore have$ $\[
K_{X}(t,x)=\sum_{n\geq0}c_{n}(x)K_{q+1}(t,n)\]
or more explicitly by inserting the expression from Proposition \ref{pro:The-heat-kernel} for $K_{q+1}(t,n)$,
\[
K_{X}(t,x)=e^{-(q+1)}\sum_{n\geq0}c_{n}(x)\sum_{j=0}^{\infty}d_{q}(j)q^{-n/2-j}I_{n+2j}(2\sqrt{q}t),\]
where $d_{q}(j)$ is $1$ if $j=0$ and $1-q$ otherwise. A rearrangement
of the terms gives \[
K_{X}(t,x)=e^{-(q+1)t}\sum_{m\geq0}b_{m}(x)q^{-m/2}I_{m}(2\sqrt{q}t),\]
where $b_{m}(x)=c_{m}(x)-(q-1)(c_{m-2}(x)+c_{m-4}(x)+...)$ where the
last term is $c_{1}(x)$ if $m$ is odd and $c_{0}(x)$ if $m$ is even.
This is also with the understanding that $b_{0}(x)=c_{0}(x)$ and
$b_{1}(x)=c_{1}(x)$. This proves Theorem \ref{thm:The-heat-kernel}.

Now specialize to $x=x_0.$ In view of Propositions
\ref{pro:pathcombinatorics} and \ref{pro:The-heat-kernel} we obtain
Corollary \ref{corollary} as well.

\subsection{Spectral theory}

An excellent reference here is that of Mohar and Woess \cite{MoWo89}.
One has that there are spectral measures $\mu_{x}$ such that (suppressing
$x_{0}$)
\begin{equation}
  \label{eq:spectral_measure_continuous}
K_{X}(t,x)=\int e^{-\lambda t}d\mu_{x}(\lambda).
\end{equation}

In particular if $X$ is a finite graph with $n$ vertices, then the
Laplacian has eigenvalues $0=\lambda_{0}<\lambda_{1}\leq...\leq\lambda_{n-1}$
and corresponding orthonormal eigenfunctions $\phi_{j}$. The heat
kernel may thus be written as
\begin{equation}
  \label{spectral_measure_finite}
K_{X}(t,x_{0},x)=\frac{1}{n}
\sum_{j=0}^{n-1}e^{-\lambda_{j}t}\phi_{j}(x)\overline{\phi_{j}(x_{0})}.
\end{equation}

\section{Ihara formulas}

In this final section we describe how the $G$-transform introduced in
Section \ref{sub:An-integral-transform} applied to the heat kernel
gives rise to the Ihara zeta function.

\subsection{Zeta functions}

Motivated by Selberg's work, Ihara defined a zeta function for a finite graph $X$,
which is now referred to as \emph{the Ihara zeta function}.  The product formula
for the Ihara zeta function is
\[
\zeta_{X}^{Ih}(u)=\prod_{\left[P\right]}(1-u^{l(P)})^{-1}
\]
where the product is over equivalence classes of prime geodesics and
$l$ the length. (Actually, Ihara worked in a specific group setting,
but Serre remarked in the preface of \cite{Serre80} that the definition
could be given a simple interpretation in terms of graphs.) By a general
calculation, see for example \cite[p. 29]{Terras11}, one has\[
\log\zeta_{X}^{Ih}(u)=\sum_{m=1}^{\infty}\frac{N_m}{m}u^{m},\]
where $N_m$ is the number of closed geodesics of length $m.$ Thus
the Ihara function is a zeta type function similar to those appearing
in the classical works of Artin, Hasse, and Weil on counting points
of varieties in finite fields.

The numbers $N_m$ are not defined for infinite graphs $X$. In the
case of transitive graphs, a natural replacement is $N_{m}^{0},$ since
in the finite case one has $N_m=nN_{m}^{0}$, where $n$ is the number
of vertices. So we can define a zeta function for any (not necessarily
finite) vertex transitive graph via\[
\log\zeta_{X}(u)=\sum_{m=1}^{\infty}\frac{N_{m}^{0}}{m}u^{m}.\]
More exotically one could define a two variable function $\zeta$
via\[
\log\zeta_{X}(u,x)=\sum_{m=1}^{\infty}\frac{b_{m}(x)}{m}u^{m}.\]
Following Riemann one has another set of zeta functions by instead
taking the Mellin transform of the the heat kernel. This is a subject
for another paper.

\subsection{The periodization side}

We will apply the transform
\[
Gf(u)=(u^{-2}-q)\int_{0}^{\infty}e^{-(qu+1/u)t}e^{(q+1)t}f(t)dt
\]
 first to our heat kernel expression
 \[
K_{X}(t,x_{0},x)=
e^{-(q+1)t}\sum_{m=0}^{\infty}b_{m}(x)q^{-m/2}I_{m}(2\sqrt{q}t)
\]
of Theorem \ref{thm:The-heat-kernel}.
Using the basic formula in subsection \ref{sub:An-integral-transform},
in the case $x\neq x_{0}$, so $b_{0}(x)=0$ we have that the $G$-transform
of the heat kernel is equal to
\[
\frac{1}{u}\sum_{m=0}^{\infty}b_{m}(x)u^{m}=\frac{\partial}{\partial u}\sum_{m=1}^{\infty}\frac{b_{m}(x)}{m}u^{m}
=\frac{\text{\ensuremath{\partial}}}{\partial u}\log\zeta_X(u,x).
\]
%writing it as a derivative.
In the case $x=x_{0}$, the $G$-transform of the
heat kernel on the diagonal is equal to
\[
\frac{1}{u}\sum_{m=0}^{\infty}b_{m}u^{m}=\frac{\partial}{\partial u}\sum_{m=1}^{\infty}\frac{b_{m}}{m}u^{m}+\frac{\partial}{\partial u}\log u.
\]

In the vertex transitive case, for the case $x=x_{0}$ we apply the
transform to the expression of Corollary \ref{corollary}
\[
K_{X}(t,x_{0},x_{0})=
K_{q+1}(t,x_{0})+
e^{-(q+1)t}\sum_{m=1}^{\infty}N_{m}^{0}q^{-m/2}I_{m}(2\sqrt{q}t)\]
which gives (summing the geometric series arising from the first term)\[
1/u-(q-1)\frac{u}{1-u^{2}}+\frac{1}{u}\sum_{m=1}^{\infty}N_{m}^{0}u^{m}.
\]
This can in turn be written as
\[
\frac{\partial}{\partial u}\log u+\frac{q-1}{2}
\frac{\partial}{\partial u}\log(1-u^{2})+\frac{\partial}{\partial u}
\sum_{m=1}^{\infty}\frac{N_{m}^{0}}{m}u^{m}\]
\[
\qquad
=\frac{\partial}{\partial u}\log u+\frac{q-1}{2}
\frac{\partial}{\partial u}
\log(1-u^{2})+\frac{\partial}{\partial u}\log(\zeta_X(u)).
\]

We have proven

\begin{prop}
  \label{prop:G_transform_hk}
For $X$ a $(q+1)$-regular vertex transitive graph,
$$(GK_X)(\cdot, x_0)(u)=
\frac{\partial}{\partial u}\left[
\log u+\frac{q-1}{2}\log(1-u^{2})+\log\zeta_X(u).
\right]
$$
\end{prop}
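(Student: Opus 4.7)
The plan is to reduce the proposition to the computation already carried out in the preceding subsection and to package it as a single clean statement. My starting point is Corollary \ref{corollary}, which decomposes the diagonal heat kernel as
$$
K_X(t,x_0,x_0) = K_{q+1}(t,x_0,x_0) + e^{-(q+1)t}\sum_{m=1}^{\infty} N_m^0 q^{-m/2} I_m(2\sqrt{q}t).
$$
I would then apply the $G$-transform termwise. The justification for exchanging $G$ with the infinite sum rests on the universal I-Bessel bounds recorded in subsection \ref{sub:Universal-bounds-for}, which guarantee absolute convergence of the Laplace-type integrals against the $(q+1)$-regular periodization; I would mention this explicitly but not belabor it.

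For the first summand, I would insert the explicit expression for the tree heat kernel from Proposition \ref{pro:The-heat-kernel} evaluated at $r=0$, namely
$$
K_{q+1}(t,x_0,x_0) = e^{-(q+1)t}I_0(2\sqrt{q}t) - (q-1)\sum_{j=1}^\infty q^{-j} e^{-(q+1)t} I_{2j}(2\sqrt{q}t),
$$
and apply the basic identity $G\!\left(e^{-(q+1)t}q^{-k/2}I_k(2\sqrt{q}t)\right)(u) = u^{k-1}$ from subsection \ref{sub:An-integral-transform} to each term. This yields
$$
G(K_{q+1}(\cdot,x_0,x_0))(u) = \frac{1}{u} - (q-1)\sum_{j=1}^\infty u^{2j-1} = \frac{1}{u} - \frac{(q-1)u}{1-u^2},
$$
after summing the resulting geometric series. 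For the second summand, the same basic identity gives $\frac{1}{u}\sum_{m=1}^\infty N_m^0 u^m$, which is defined for $|u|$ sufficiently small (and thereby produces the interval of $u$ for which the proposition holds).

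Finally, I would match the three pieces against logarithmic derivatives: $\frac{1}{u} = \frac{\partial}{\partial u}\log u$, then $-\frac{(q-1)u}{1-u^2} = \frac{q-1}{2}\frac{\partial}{\partial u}\log(1-u^2)$, and lastly $\frac{1}{u}\sum_{m=1}^\infty N_m^0 u^m = \frac{\partial}{\partial u}\sum_{m=1}^\infty \frac{N_m^0}{m}u^m = \frac{\partial}{\partial u}\log \zeta_X(u)$ directly from the definition of $\zeta_X$. Adding these three gives the claimed identity. The only genuinely nontrivial point is the termwise application of $G$ to an infinite series, which I would handle by dominating the tails via the bound of subsection \ref{sub:Universal-bounds-for} combined with the polynomial growth of $N_m^0$ (bounded by $(q+1)^m$, the total number of paths), so that Fubini applies for all $u$ in a neighborhood of $0$. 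Everything else is bookkeeping.
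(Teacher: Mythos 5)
Your proof is correct and follows essentially the same route as the paper: apply the $G$-transform termwise to the decomposition of Corollary \ref{corollary}, use the basic identity $G\bigl(e^{-(q+1)t}q^{-k/2}I_k(2\sqrt{q}t)\bigr)(u)=u^{k-1}$, sum the geometric series from the tree term, and recognize the three logarithmic derivatives. (One trivial slip: the growth of $N_m^0$, bounded by $(q+1)q^{m-1}$, is exponential rather than ``polynomial,'' but your stated bound is the one actually used and the convergence argument is unaffected.)
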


In summary, the $G$-transform of the heat kernel yields expressions
involving the Ihara zeta function together with other trivial terms.
In the setting of compact quotients of rank one symmetric spaces,
there is a similar change of variables in the Laplace transform so
that when applied to the trace of the heat kernel, one obtains the
Selberg zeta function.  With this in mind, our approach to the Ihara
zeta function as an integral transform of the heat kernel is in line
with a known approach to the Selberg zeta function in many settings.

\subsection{Ihara's determinantal formula\label{sub:Ihara's-determinantal-formula}}

Now we deduce the classical Ihara determinantal formula. Let $X$
be a $(q+1)$-regular graph of finite vertex cardinality $n$. From
(\ref{spectral_measure_finite}),
\[
K_{X}(t,x_{0})=\frac{1}{n}\sum_{j=0}^{n-1}e^{-\lambda_{j}t}.
\]
The $G$-transform of the righthand side is a simple integration which yields
\[
\frac{1}{n}(u^{-2}-q)\sum_{j=0}^{n-1}\frac{1}{qu+1/u-(q+1-\lambda_{j})}=
-\frac{1}{n}\frac{\partial}{\partial u}
\sum_{j=0}^{n-1}\log\frac{1}{u}\left(1-(q+1-\lambda_{j})u+qu^{2}\right).
\]
Comparing this last expression with Proposition
\ref{prop:G_transform_hk}
from the periodization side (and also verifying that
integration constants match up) we immediately get Ihara's formula, namely
\[
\frac{1}{\zeta_X^{Ih}(u)}=(1-u^{2})^{n(q-1)/2}\det((1-(q+1)u+qu^{2})I+\Delta u),
\]
since, as remarked above,
\[
\frac{1}{u}\sum_{m=1}^{\infty}N_mu^{m}=
\frac{\partial}{\partial u}\log\zeta_X^{Ih}(u).
\]
This formula is also known to hold more generally
for non-regular graphs, see the references mentioned in the introduction.

\subsection{First extension of Ihara's formula}

Using spectral theory, we obtain a similar formula for infinite
transitive graphs, this time using our zeta function instead of
Ihara's.  The common point is that both zeta functions are obtained as
$G$-transforms of the heat kernel, and the determinantal formula
follows from having another expression for the heat kernel, namely
that which comes from spectral theory.

We use the notation $\mu=\mu_{x_{0}}$.  Since the functions involved
are positive, we may change the order of integration in our integral
transforms and arrive at the expression
Equating the expression in Proposition \ref{prop:G_transform_hk} with
the $G$-transform of the spectral expansion of the heat kernel given
in (\ref{eq:spectral_measure_continuous}), we arrive at
\[
\frac{\partial}{\partial u}\left[
\log u+\frac{q-1}{2}\log(1-u^{2})+\log\zeta_X(u)
\right]
=-\frac{\partial}{\partial u}\int\log\frac{1}{u}\left(1-(q+1-\lambda)u+qu^{2}\right)d\mu(\lambda).
\]
Note: since the functions involved
are positive, we are justified in interchanging the spectral integral
with the $G$-transform integral on the righthand side.
We now integrate this equality, noting that at $u=0$ both sides are $0$
to determine the integration constants. We get the formula
\[
\log u+\frac{q-1}{2}\log(1-u^{2})+\log\zeta_X(u)
=-\int\log\frac{1}{u}\left(1-(q+1-\lambda)u+qu^{2}\right)d\mu(\lambda),\]
which leads to
\begin{equation}\label{Iharafirst}
\zeta_{X}(u)^{-1}=(1-u^{2})^{(q-1)/2}\exp
\left[\int\log\left(1-(q+1-\lambda)u+qu^{2}\right)d\mu(\lambda)\right].
\end{equation}
This is Theorem \ref{thm:iharainfinite}.
We further remark that
equation (\ref{Iharafirst}) clearly generalizes the Ihara determinant
formula since for vertex transitive graphs with a finite number $n$
vertices one has $\zeta_X^{Ih}=\zeta_X^{n}$.

\subsection{Second extension of Ihara's formula}

Here we do not specialize to $x=x_{0}$. The resulting identities involve
counting geodesics paths, not only closed geodesics paths.
Alternatively, as in the most classical situation, our consideration corresponds to
computing the Hurwitz zeta function instead of the Riemann zeta function.
With the same calculations as above, one gets in the finite graph case,
(at one point one uses orthogonality of eigenfunctions) the formula
\[
-\log\zeta_X(u,x)=\frac{1}{n}\sum_{j=0}^{n-1}f_{j}(x)\bar{f_{j}}(x_{0})\log(1-(q+1-\lambda_{j})u+qu^{2}).
\]
Thus the eigenfunctions come in to determine the more precise count
of geodesics. The lead asymptotic as the length goes to infinity
behaves the same as for the closed geodesics since the trivial eigenvalue
has the constant function as eigenfunction. From an intuitive viewpoint,
this observation is clear: For a fixed $x$ and large length $m$ the geodesics
do not look much different from a closed geodesic. In symbols, if $m\gg1$
then $x\approx x_{0}$.

We have the analogous formula for infinite regular graphs, namely that
\[
-\log\zeta_X(u,x)=\int\log(1-(q+1-\lambda_{j})u+qu^{2})d\mu_{x}(\lambda).
\]

\def\cprime{$'$}

\vspace{5mm}

\noindent Gautam Chinta \\
 Department of Mathematics \\
 The City College of New York \\
 Convent Avenue at 138th Street \\
 New York, NY 10031 U.S.A. \\
 e-mail: chinta@sci.ccny.cuny.edu

\vspace{5mm}
 Jay Jorgenson \\
 Department of Mathematics \\
 The City College of New York \\
 Convent Avenue at 138th Street \\
 New York, NY 10031 U.S.A. \\
 e-mail: jjorgenson@mindspring.com

\noindent \vspace{5mm}

\noindent Anders Karlsson \\
Mathematics Department \\
University of Geneva \\
1211 Geneva, Switzerland \\
e-mail: anders.karlsson@unige.ch
\end{document}